\documentclass[a4paper,12pt]{article}
\usepackage[margin=1in]{geometry}
\usepackage{amsmath,amsthm,amssymb,amsfonts}
\usepackage{hyperref}

\newtheorem{theorem}{Theorem}
\newtheorem{lemma}{Lemma}
\newtheorem{remark}{Remark}

\title{Explicit entropy bounds for symmetric nearest-neighbor subshifts}

\author{Vuong Bui\thanks{\texttt{bui.vuong@yandex.ru}}}
\date{}

\begin{document}

\maketitle

\begin{abstract}
    We provide another approach to Friedland's result that the topological entropy $h$ of a symmetric nearest-neighbor subshift is computable. Instead of the previous algebraic technique, our approach is mostly combinatorial and involves only counts of locally admissible patterns $C_n$ of a cube $[1,n]^d$ in $\mathbb Z^d$. The main idea is a reflection-gluing construction: we flip admissible patterns and merge them along their boundaries. In addition to a short and elementary proof, another advantage is that our approach yields an explicit convergence rate in arbitrary dimensions, whereas obtaining such a rate is already complicated for $\mathbb Z^3$ in Friedland's approach. In particular, we show that for every $n\ge 1$,
    \[
        \frac{1}{n^d}(\log C_{n+1} - q_d(n)\log|\Sigma|) \le h \le \frac{1}{n^d} \log C_n,
    \]
    where $\Sigma$ is the alphabet and
    \[
        q_d(n)=(2^d-1)\sum_{k=0}^{d-1} \frac{\binom{d}{k}}{2^d-2^k}\, n^k.
    \]
\end{abstract}

\section{Introduction}
The topological entropy of subshifts of finite type (SFTs) is a standard object in
symbolic dynamics. For general background, especially in one-dimensional symbolic dynamics,
we refer to \cite{lind2021introduction}.
This notion is however well known to be uncomputable in general for higher dimensions. In fact, it is already undecidable whether a given SFT is empty, due to Berger's seminal work on the domino problem \cite{berger1966undecidability}. In particular, although an empty SFT must already fail to admit a locally admissible pattern on some finite square, there is no computable bound, in general, on the size of the first such square. This phenomenon highlights a fundamental distinction between the plane and the one-dimensional case: the presence of one extra dimension allows the simulation of much more complicated models of computation, including a Turing machine. As a consequence, the emptiness problem for two-dimensional SFTs is closely related to the halting problem.

The connection between computability theory and SFTs is in fact much deeper. In a breakthrough result of Hochman and Meyerovitch \cite{hochman2010characterization}, it is shown that the class of topological entropies coincides precisely with the class of nonnegative real numbers that are right-recursively enumerable, that is, those admitting a computable approximation from above.
While the construction of an SFT realizing a given right-recursively enumerable number is highly nontrivial, the converse direction, that the topological entropy of every SFT is right-recursively enumerable, is considerably simpler.
This latter fact  was already known much earlier, going back to the work of Friedland \cite{friedland1997entropy}, and was later refined with a cleaner proof in \cite{hochman2010characterization}. In fact, these works show that the topological entropy coincides with the combinatorial entropy, which is defined purely in terms of the growth rate of locally admissible patterns on finite cubes. The combinatorial entropy is often more convenient to work with, since we just need to generate local configurations without the need to decide whether they can be extended to the entire plane, a problem which is itself undecidable. Local configurations are actually more commonly studied in combinatorics on words.

To prove the computability of topological entropy in certain restricted classes of SFTs, it therefore suffices to provide an explicit sequence of lower bounds converging to the entropy. For instance, in \cite{hochman2010characterization} this is achieved for strongly irreducible SFTs, and in \cite{pavlov2015entropies} for SFTs satisfying a strong mixing property. In both cases, the proofs proceed by constructing larger configurations and gluing them together, a process justified by the imposed restrictions. However, no explicit rate of convergence is obtained since these conditions are still quite general.

Throughout this article, all subshifts are assumed to be subshifts of finite type. Thus, for brevity, we will simply write ``subshift'' instead of ``subshift of finite type.''

To obtain stronger results, one may study simpler cases, for example, symmetric nearest-neighbor subshifts. Recall that a subshift is nearest-neighbor if the forbidden patterns involve only adjacent pairs of points. 
A subshift is called symmetric if it is invariant under
reflection in each coordinate direction.
More precisely, in our case of symmetric nearest-neighbor subshifts, if we let $F_i\subset \Sigma^2$ denote the set of forbidden adjacent pairs in the direction of the $i$-th coordinate, then
\[
(a,b)\in F_i \iff (b,a)\in F_i
\]
for all \(a,b\in\Sigma\) and all \(i=1,\ldots,d\).
Examples include hard square models (the alphabet is $\{0,1\}$ without two adjacent $1$s) and proper colorings (the alphabet is the set of colors without two adjacent points having the same color). Note that in these examples the values in the forbidden patterns are the same across all the coordinates but it is not required in a general symmetric nearest-neighbor subshift.
In other words, we do not assume that the sets 
$F_i$ are the same for different coordinates $i$.

Friedland \cite{friedland1997entropy} showed that for symmetric nearest-neighbor subshifts, the three terms: (i) the topological entropy, (ii) the combinatorial entropy, and (iii) the growth rate of periodic patterns all coincide. 
Recall that while the topological entropy is expressed in terms of the number of globally admissible patterns, the combinatorial one involves only locally admissible patterns.
Note that the number of locally admissible patterns on a finite cube gives an explicit upper bound on the entropy, while counts of periodic patterns provide lower bounds.
Therefore, we have the computability of the entropy as a corollary. While this approach is structurally deep (and beautiful) with the relation between several notions of entropy, the proof relies on delicate algebraic arguments. The convergence rate was provided for $\mathbb Z^2$ but it is complicated to obtain one for higher dimensions.

In this article, we propose a combinatorial approach that yields a shorter proof of the computability for symmetric nearest-neighbor subshifts. Our proof does not use algebraic tools as in the case of Friedland's proof, but mostly some intuitive geometric manipulations together with elementary estimates. In particular, the key idea is that we flip the patterns and glue them by merging the boundaries.
One advantage of our approach is that it works directly with counts of locally admissible patterns. 
These quantities are directly enumerable and avoid the additional step of
constructing and rigorously estimating the spectral radii of transfer matrices
as in Friedland's approach.
However, the main advantage of our method is an explicit convergence rate in arbitrary dimensions.

Let $\Sigma$ denote the alphabet, and $C_n$ denote the number of locally admissible patterns on the cube $[1,n]^d$.
In other words, it is the number of elements in $\Sigma^{[1,n]^d}$ so that no two adjacent points in $[1,n]^d$ form a forbidden pattern. 

Our approach in the article yields the following main result.
\begin{theorem} \label{thm:main}
    For every $n\ge 1$, the topological entropy $h$ of a symmetric nearest-neighbor subshift satisfies
    \[
        \frac{1}{n^d}(\log C_{n+1} - q_d(n)\log|\Sigma|) \le h \le \frac{1}{n^d} \log C_n,
    \]
    where
    \[
        q_d(n)=(2^d-1)\sum_{k=0}^{d-1} \frac{\binom{d}{k}}{2^d-2^k} \, n^k.
    \]
\end{theorem}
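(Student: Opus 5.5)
The upper bound $h\le \frac1{n^d}\log C_n$ is the easy half, and I would dispose of it first. Every globally admissible pattern on $[1,N]^d$ is locally admissible, so the number of globally admissible patterns is at most $C_N$. Cutting $[1,mn]^d$ into $m^d$ disjoint cubes of side $n$ gives $C_{mn}\le C_n^{m^d}$, because the restriction to each subcube is again locally admissible. Hence
\[
h\le \liminf_{m\to\infty}\frac{1}{(mn)^d}\log C_{mn}\le \frac1{n^d}\log C_n .
\]
No symmetry is needed for this half.

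For the lower bound I would use the reflection-gluing construction to produce many globally admissible (indeed periodic) configurations from the $C_{n+1}$ locally admissible patterns on $[1,n+1]^d$. The basic observation is the following. Let $P$ be a pattern on $[1,n+1]^d$, and let $Q$ be a pattern on the adjacent cube that shares the face $\{x_i=n+1\}$ with $P$ and agrees with $P$ on that face. Then the union of $P$ and $Q$ is locally admissible, since every adjacent pair lies inside one of the two cubes. Mirroring $P$ across this face is exactly such a $Q$, and by the symmetry $(a,b)\in F_i\iff(b,a)\in F_i$ the mirror image of a locally admissible pattern is again locally admissible. I would therefore tile $\mathbb Z^d$ by cubes of side $n+1$ overlapping in their boundary layers, with period $n$ in each direction. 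Each cell receives a pattern, reflected according to the parity vector of its position, and the constraint is that adjacent cells agree on their common faces. Every configuration built this way is globally admissible, so the number of distinct configurations obtained on a large box $[1,Nn]^d$ bounds $e^{h(Nn)^d+o(N^d)}$ from below.

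The counting step is to force agreement on shared faces without losing too much. I would do this by a pigeonhole argument organized along the skeleton of the cell grid. For each $k$ from $0$ to $d-1$, I would fix the values on the $k$-dimensional faces to be a most popular choice among the patterns still available, so that the surviving patterns number at least $C_{n+1}/|\Sigma|^{(\#\text{boundary points fixed})}$. Within a block of $2^m\times\cdots\times2^m$ cells, I would glue the surviving interior patterns freely, since all of them now agree on every shared face. I would also refine hierarchically: the $2^d$ sub-blocks of a doubled block are reflections of one another, so a $k$-face shared between sub-blocks is fixed only once per level. A cell has $\binom dk 2^{d-k}$ faces of dimension $k$, of size about $n^k$, and each such face is shared by $2^{d-k}$ cells. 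Carrying out the per-level pigeonhole and summing the geometric series over levels of the doubling should produce a cost per cell of
\[
(2^d-1)\sum_{k<d}\frac{\binom dk n^k}{2^d-2^k}\log|\Sigma| = q_d(n)\log|\Sigma|.
\]
The factor $1/(2^d-2^k)$ is what the ratio $2^k/2^d$ between successive levels yields. This gives $n^d h\ge \log C_{n+1}-q_d(n)\log|\Sigma|$.

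The main obstacle is precisely this bookkeeping: designing the hierarchical pigeonhole so that the loss per level is charged correctly and the geometric sum matches $q_d(n)$ exactly, rather than the cruder bound $(n+1)^d-(n-1)^d$ that a single pigeonhole on the whole boundary would give. I would first verify the scheme in $d=1$, where $q_1=1$, and in $d=2$, where $q_2(n)=1+2n$, and then generalize by induction on the face dimension $k$.
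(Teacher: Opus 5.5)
Your upper bound is complete. The ingredients you name for the lower bound (reflection gluing along faces, a doubling hierarchy, a geometric series over levels) are also the right ones. The gap is the counting step. You leave it as ``should produce'', and with the mechanism you describe it does not give $q_d(n)$.

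Fixing shared boundary data to a most popular value costs the full factor $|S|$ for each of the $2^d$ sub-blocks. From $C^{(s^*)}_{N+1}\ge C_{N+1}/|S|$ you only get $C_{2N+1}\ge C_{N+1}^{2^d}/|S|^{2^d}$. That a face is shared by several cells gives no saving under a max-pigeonhole, because every cell still draws its pattern from the reduced pool. Summing over the doubling levels, the per-cell loss (in units of $\log|\Sigma|$) is
$2^d\sum_{k<d}\binom dk n^k/(2^d-2^k)=\frac{2^d}{2^d-1}q_d(n)$.
For $d=1$ this is $2$ rather than $q_1=1$, so your proposed sanity check would fail. For $d=2$ it is $4n+\frac43$, worse even than the one-shot bound $(n+1)^2-(n-1)^2=4n$. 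Also, $q_2(n)=3n+1$ (the exponent in the planar statement), not $1+2n$. The value $2n+1=(n+1)^2-n^2$ is what ``paying once per shared face'' would give, so your heuristic aims at the wrong target and never explains the numerator $2^d-1$ in $q_d$.

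The missing idea is to average over the shared state by convexity instead of fixing it. Take the state of a pattern on $[1,N+1]^d$ to be only its upper faces $[1,N+1]^d\setminus[1,N]^d$. Take any $2^d$ patterns with a common state; they are chosen independently, not reflections of one another. Flip them by the $\tau_t$ and translate by $N$ in the flipped directions. They then glue injectively into an admissible pattern on $[1,2N+1]^d$, because the pieces overlap only at sites lying in their common state. Hence $C_{2N+1}\ge\sum_s (C^{(s)}_{N+1})^{2^d}$. Keep the sum and apply the power mean inequality $\sum_s a_s^p\ge|S|^{1-p}(\sum_s a_s)^p$. This gives $C_{2N+1}\ge C_{N+1}^{2^d}/|\Sigma|^{(2^d-1)((N+1)^d-N^d)}$, recovering one factor of $|S|$; that is exactly where $2^d-1$ comes from. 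The identity $q_d(2n)+(2^d-1)((n+1)^d-n^d)=2^dq_d(n)$ then makes $v_n=(C_{n+1}/|\Sigma|^{q_d(n)})^{1/n^d}$ nondecreasing along $n,2n,4n,\dots$. Since $v_m\to e^h$ (combinatorial entropy equals topological entropy), the lower bound follows, with no bookkeeping by face dimension.
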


We assume throughout the article that the subshift is nonempty. 
However, in case of an empty subshift, the bounds in Theorem \ref{thm:main} still work with the convention that $h=-\infty$ and $\log 0 = -\infty$. Indeed, as we will discuss later, $C_n$ for $n\ge 2$ are either all zeros or all positive. Therefore, if $C_2=0$, then $h=-\infty$ and the lower bound is also $-\infty$ for every $n\ge 1$. On the other hand, $h=-\infty$ means that the upper bound always works.

The main contribution of Theorem \ref{thm:main} is the lower bound, since it is already well known that
\begin{equation} 
    h \le \frac{1}{n^d} \log C_n.
\end{equation}
Remark \ref{rem:concat} sketches the simple concatenation used to prove this upper bound, with some comparisons to the approach used for the lower bound.

Note that Theorem \ref{thm:main} does not violate the undecidability of the emptiness of arbitrary subshifts of finite type, since in our case the symmetry allows us to extend every locally admissible pattern on $[1,n]^d$ for $n\ge 2$ to a periodic pattern and hence to fill up the whole plane. This will be proved in Lemma \ref{lem:extending} in Section \ref{sec:plane}.  In that lemma, we also have $C_n\le C_{n+1}$ for $n\ge 2$. Moreover, the lemma  reproves that the combinatorial entropy and the topological entropy are identical for this kind of subshifts. While Lemma \ref{lem:extending} is not essential for the proof of Theorem \ref{thm:main}, we believe it is of independent interest, as it encapsulates the main ideas of the article. The lemma could be useful elsewhere.

The computability follows from Theorem \ref{thm:main}. Indeed, for $n\ge 2$, we have $C_n\le C_{n+1}$, hence the gap between the upper bound and the lower bound is at most
\[
    \frac{q_d(n)\log|\Sigma|}{n^d}.
\]
This gap is $O(1/n)$ as $q_d(n)$ has degree $d-1$.
More precisely, the leading term of \(q_d(n)\) is
\[
(2^d-1)\binom{d}{d-1}\frac{n^{d-1}}{2^d-2^{d-1}}
=
d\left(2-2^{1-d}\right)n^{d-1}.
\]
Therefore, the gap between the upper and lower bounds is
\[
\frac{q_d(n)\log|\Sigma|}{n^d}
=
\frac{d\left(2-2^{1-d}\right)\log|\Sigma|}{n}
+
O(n^{-2}).
\]

We first present the proof in the case of $\mathbb Z^2$, where the key ideas are most transparent, in Section \ref{sec:plane}. We then extend the argument to arbitrary dimensions, which requires more notation and some estimates, in Section \ref{sec:higher}.

\section{On the plane}
\label{sec:plane}
In this section, we show how to obtain Theorem \ref{thm:main} for $\mathbb Z^2$, which is rewritten as follows.
\begin{theorem} \label{thm:plane}
    For every $n\ge 1$, the topological entropy $h$ of a symmetric nearest-neighbor subshift satisfies
    \[
        \left(\frac{C_{n+1}}{|\Sigma|^{3n+1}}\right)^{\frac{1}{n^2}} \le e^h \le (C_n)^{\frac{1}{n^2}}.
    \]
\end{theorem}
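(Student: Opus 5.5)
The plan is to prove only the lower bound, since the upper bound $e^h\le C_n^{1/n^2}$ is the standard concatenation bound recalled in the introduction. The key construction is a reflection-gluing. Tile $\mathbb Z^2$ by translates of $[1,n+1]^2$ with step $n$, so that neighbouring blocks overlap in one full line. Fill each block with a locally admissible pattern, reflected in the horizontal (resp. vertical) direction according to the parity of the block's column (resp. row) index. With these reflections, two horizontally adjacent blocks always meet along their left sides or along their right sides, and two vertically adjacent blocks always meet along their bottoms or along their tops.

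Every adjacent pair of sites lies inside some block. By symmetry of each $F_i$, a reflected locally admissible pattern is again locally admissible. Hence the resulting configuration is admissible provided only that the merged lines agree.

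\textbf{Step 1 (pigeonhole on the horizontal sides).} Classify the $C_{n+1}$ patterns on $[1,n+1]^2$ by their bottom and top rows, which together contain $2n+2$ cells. Some class $S$, with fixed rows $(r_b,r_t)$, has
\[
|S|\ge C_{n+1}/|\Sigma|^{2n+2}.
\]
Using only patterns from $S$, every vertical gluing is automatically consistent. In a given column of blocks all blocks carry the same horizontal reflection, so the merged rows are copies of $r_b$ or of $r_t$, possibly all reversed. Consequently the rows of blocks can be chosen independently of one another.

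\textbf{Step 2 (transfer matrix along a row of blocks).} The four corners are already fixed by $(r_b,r_t)$. Let $u,v\in\Sigma^{n-1}$ denote the interior cells of the left and right columns. Define
\[
M_{uv}=\#\{P\in S:\ \text{left interior }u,\ \text{right interior }v\}.
\]
Along a row of blocks, unreflected and reflected blocks alternate. A reflected block from $S$ with left interior $u$ and right interior $v$ corresponds to a pattern with left interior $v$ and right interior $u$. Therefore, for an even number $N$ of blocks with periodic closure, the number of admissible choices for one row of blocks is
\[
\operatorname{tr}\bigl((MM^T)^{N/2}\bigr)\ \ge\ \rho(MM^T)^{N/2}=\sigma_{\max}(M)^N .
\]
This avoids needing any symmetry of $M$ itself. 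The Rayleigh quotient with the all-ones vector $\mathbf 1\in\mathbb R^{\Sigma^{n-1}}$ gives
\[
\sigma_{\max}(M)\ \ge\ \frac{\mathbf 1^TM\mathbf 1}{\|\mathbf 1\|^2}\ =\ \frac{|S|}{|\Sigma|^{n-1}}\ \ge\ \frac{C_{n+1}}{|\Sigma|^{3n+1}}.
\]
This is exactly where the exponent $3n+1=(2n+2)+(n-1)$ comes from. When $n=1$ the index set is a single point and the bound reads $\sigma_{\max}(M)=|S|$.

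\textbf{Step 3 (passing to entropy).} Take $N$ rows of $N$ blocks with periodic closure; $N$ is even, so the reflection parities are consistent around the torus. Every configuration so obtained is a $2nN$-periodic configuration of the whole plane, hence globally admissible. Distinct choices give distinct restrictions to $[1,nN]^2$, since the block interiors and merged lines are recorded there. By Step 1 the rows of blocks are independent, so Step 2 gives at least $\sigma_{\max}(M)^{N^2}$ globally admissible patterns on a box of side $nN$. Letting $N\to\infty$,
\[
h\ \ge\ \frac{1}{n^2}\log\sigma_{\max}(M)\ \ge\ \frac{1}{n^2}\log\frac{C_{n+1}}{|\Sigma|^{3n+1}} .
\]

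The main obstacle is the bookkeeping in Steps 1 and 2. One must check that, after the alternating reflections, the merged vertical lines really pair a left side with a left side and a right side with a right side, so that the row count is the trace of $(MM^T)^{N/2}$. One must also check that the reflected rows $r_b,r_t$ remain consistent down each column of blocks. I would verify both on a $2\times2$ block of cells first, and then argue that the pattern is periodic.
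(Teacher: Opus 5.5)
Your argument is correct, and it reaches the same exponent $3n+1$ by a genuinely different route.

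The paper's lower bound works only with finite counts. It glues $P_1,H(P_2),V(P_3),H(V(P_4))$ into one pattern on $[1,2n+1]^2$; these are four patterns on $[1,n+1]^2$ with a common top row and right column. The power-mean inequality then gives $C_{2n+1}\ge C_{n+1}^4/|\Sigma|^{3(2n+1)}$. The paper concludes because $u_n=(C_{n+1}/|\Sigma|^{3n+1})^{1/n^2}$ increases along $n,2n,4n,\dots$ while $u_m\to e^h$.

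You instead build periodic points of the subshift directly and split the cost as $(2n+2)+(n-1)$. The pigeonhole on top and bottom rows decouples the rows of blocks, and the all-ones vector gives $\sigma_{\max}(M)\ge|S|/|\Sigma|^{n-1}$ for the transfer matrix on the side interiors. I checked the parity bookkeeping: the count per row of blocks is indeed $\operatorname{tr}((MM^T)^{N/2})$, and odd rows of blocks only conjugate $M$ by a reversal permutation, which changes nothing.

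What your route buys:
\begin{itemize}
\item It needs no doubling recursion and no input that $C_m^{1/m^2}\to e^h$. That is the equality of combinatorial and topological entropy, which the paper takes from Lemma~\ref{lem:extending} or the literature.
\item It actually lower-bounds the growth rate of periodic patterns, in the spirit of Friedland.
\item It gives the intermediate estimate $e^{n^2h}\ge\sigma_{\max}(M)$, which is at least as strong as the stated bound.
\item The construction extends to $\mathbb Z^d$: fix every boundary cell of $[1,n+1]^d$ except the interiors of the two faces normal to $e_1$. This gives the exponent $(n+1)^d-(n-1)^d-(n-1)^{d-1}$, whose leading coefficient $2d-1$ is slightly better than the paper's $d(2-2^{1-d})$ once $d\ge 3$.
\end{itemize}

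The paper's route, in exchange, avoids transfer matrices and singular values entirely, which is its stated aim. It also produces a clean finite recursion between $C_{2n+1}$ and $C_{n+1}$.

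Two minor points:
\begin{itemize}
\item Your configurations are already $nN$-periodic, so $2nN$ is true but loose.
\item Since $M$ need not be symmetric, the bound $\sigma_{\max}(M)\ge\mathbf 1^TM\mathbf 1/\|\mathbf 1\|^2$ comes from $\sigma_{\max}(M)=\max_{x,y\neq0}x^TMy/(\|x\|\,\|y\|)$, not from a Rayleigh quotient.
\end{itemize}
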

We do not write it in logarithmic form since we want to work directly with the
counts.
We recall that the topological entropy is identical to the combinatorial entropy, which is the logarithm of the growth rate of the number of locally admissible patterns \cite{hochman2010characterization}. In other words, we have
\[
    e^h = \lim_{n\to\infty} (C_n)^{1/n^2}.
\]
The fact that the two entropies are identical can later be recovered in Lemma \ref{lem:extending}, making our proof self-contained.

Our proof uses locally admissible patterns only, and from now on, we call them admissible patterns for brevity. Note that Lemma \ref{lem:extending} (later in this section) implies that local admissibility and global admissibility are the same for cubes $[1,n]^d$ with $n\ge 2$ in our case of symmetric nearest-neighbor subshifts.

For a given admissible pattern $P$ on $[1,n]^2$, we call the values at
\[
    [1,n]^2\setminus [1,n-1]^2 = (\{n\}\times[1,n]) \cup ([1,n]\times\{n\})
\]
the state of $P$. In other words, the state of $P$ is the values at the top-most row and right-most column of $P$, as depicted by
\begin{center}
    {
    \ttfamily
    \begin{tabular}{|c|c|}
    \hline
        a b c d & e \\ \hline
     & \begin{tabular}{@{}c@{}} f\\ g \\ h \\ i \end{tabular} \\
     \hline
    \end{tabular}\;.
     }
\end{center}
For such a state $s$ we denote by $C_n^{(s)}$ the number of patterns with state $s$.

Let $H(P),V(P)$ denote the horizontal and vertical flips, respectively, of a pattern $P$. By a horizontal flip, we mean a pattern also on $[1,n]^2$ but the $i$-th row of $H(P)$ is the $(n+1-i)$-th row of $P$. In other words, we swap the $i$-th row and the $(n+1-i)$-th row. The vertical flip is defined similarly, using columns instead of rows. For example, the vertical flip of the above pattern $P$ is
\begin{center}
    {
    \ttfamily
    \begin{tabular}{|c|c|}
    \hline
        e &  d b c a \\ \hline
     \begin{tabular}{@{}c@{}} f\\ g \\ h \\ i \end{tabular} & \\
     \hline
    \end{tabular}\;.
     }
\end{center}

By the symmetry of the subshift, $H(P)$ and $V(P)$ are admissible if $P$ is admissible. It follows that the pattern $H(V(P))=V(H(P))$ is also admissible.

Translate $H(P),V(P),H(V(P))$ to the squares 
\[
    [1,n]\times [n,2n-1],\quad [n,2n-1]\times [1,n],\quad [n,2n-1]\times [n,2n-1].
\]
In other words, we translate them so that $P$ and $H(P)$ intersect at the top-most row of $P$ (which is also the bottom-most of $H(P)$). Likewise, for the other pairs, they intersect either at a row, or a column, or at the corner \texttt{e} as depicted in the following resulting pattern of size $[1,2n-1]\times [1,2n-1]$:
\begin{center}
{
\ttfamily
\begin{tabular}{|c|c|c|}
\hline
$H(P)$ & \begin{tabular}{@{}c@{}} i\\h\\g\\f \end{tabular} & $H(V(P))$ \\
\hline
a b c d & e & d c b a \\
\hline
$P$ & \begin{tabular}{@{}c@{}} f\\g\\h\\i \end{tabular} & $V(P)$ \\
\hline
\end{tabular}\;.
}
\end{center}
Every adjacent pair of sites in the resulting pattern is contained in at
least one of the four components. Since each component is
admissible, no forbidden adjacent pair can occur.
In other words, the resulting pattern is admissible.

We provide the following lemma to make the proof self-contained and to analyze the convergence rate. 
This lemma is not needed if one invokes the known equality of topological and combinatorial entropy.
\begin{lemma} \label{lem:extending}
For a symmetric nearest-neighbor subshift, we can extend every locally admissible pattern $P$ of $[1,n]^2$ for $n\ge 2$ to a periodic
pattern, and hence extend it to the entire plane.
As a corollary, the combinatorial entropy and the topological entropy are identical for symmetric nearest-neighbor subshifts.
On the other hand, $P$ can also be extended to a pattern of $[1,n+1]^2$, which implies $C_n\le C_{n+1}$ for $n\ge 2$. 
The same applies for higher dimensions.
\end{lemma}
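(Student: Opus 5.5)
The plan is to use the reflection-gluing construction just described to build a periodic configuration, and then to read off each part of the statement from it.

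\emph{Periodic extension.} Start from the glued pattern $Q$ on $[1,2n-1]^2$, made of $P$, $H(P)$, $V(P)$ and $H(V(P))$, which is admissible as argued above. Its bottom row equals its top row: the top row of $H(P)$ is the bottom row of $P$, and the same holds on the right half. Likewise its left column equals its right column. So $Q$ restricted to $[1,2n-2]^2$ is a fundamental domain, and we tile $\mathbb Z^2$ with period $2n-2$ in each direction. The hypothesis $n\ge 2$ ensures the period is positive.

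Next we check admissibility of the tiling. Every adjacent pair in the periodic configuration, after translating by a multiple of the period, is an adjacent pair inside $Q$. This uses that $Q$ has width $2n-1$, one more than the period, so pairs crossing the boundary of the fundamental domain are realized inside $Q$ along its last row or column. Hence no forbidden pair occurs. The tiling is a point of the subshift containing $P$, so $P$ is globally admissible.

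\emph{Entropy corollary.} Since every locally admissible pattern on $[1,n]^2$ with $n\ge 2$ is globally admissible, the counts of locally and globally admissible patterns on cubes coincide. Therefore the combinatorial and topological entropies agree.

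\emph{Extension to $[1,n+1]^2$ and $C_n\le C_{n+1}$.} Extend $P$ by one reflected layer: set the value at $(n+1,j)$ equal to the value at $(n-1,j)$, and the value at $(i,n+1)$ equal to the value at $(i,n-1)$ (including $i=n+1$, which gives the value at $(n-1,n-1)$). This is exactly the restriction of the periodic configuration above to $[1,n+1]^2$, so it is admissible. The restriction map from admissible patterns on $[1,n+1]^2$ to those on $[1,n]^2$ is then surjective, and $C_n\le C_{n+1}$ follows. For higher dimensions the argument is the same: reflect in each coordinate hyperplane $x_i=n$, obtaining $2^d$ copies on $[1,2n-1]^d$. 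Any adjacent pair differs in one coordinate, so it lies in one copy, and the same periodization with period $2n-2$ works.

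\emph{Main obstacle.} The only delicate step is the bookkeeping that every adjacent pair of the periodic tiling is contained in a single reflected copy of $P$. I would handle this by writing the configuration as $x\mapsto P(\phi(x))$, where $\phi$ folds each coordinate by the triangle-wave map $\mathbb Z\to[1,n]$ with period $2n-2$. Adjacent sites map under $\phi$ either to adjacent sites of $[1,n]^d$, or to the same site, which happens only at fold points. In the second case the two values are equal, so that pair is never adjacent in the relevant direction and needs no check. I would still verify that a pair $(a,a)$ can only arise through such folding, so that $\phi$ being $1$-Lipschitz in each coordinate suffices.
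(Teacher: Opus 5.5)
Your proof is correct and follows the paper's. You use the same glued pattern $Q$ on $[1,2n-1]^2$, periodized with period $2n-2$, and the same restriction of $Q$ to $[1,n+1]^2$ (your one-layer reflection) to get $C_n\le C_{n+1}$. Your check that each adjacent pair of the tiling is a period-translate of an adjacent pair of $Q$ is more careful than the paper's one-line assertion.

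Your closing paragraph is not needed for the proof, but it is wrong as stated. With period $2n-2$ the fold is at a site. The triangle wave therefore runs $1,2,\dots,n,n-1,\dots,2,1,2,\dots$, and $\phi(x+e_i)=\phi(x)\pm e_i$ always holds, so adjacent sites never fold onto the same site.

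If they did, your reasoning would fail. That would happen if you abutted $P$ and its flip across a half-integer instead of sharing the boundary column. The resulting adjacent pair $(a,a)$ would then need checking, because it can be forbidden, for example in proper colorings. Avoiding this is exactly why the construction merges boundaries rather than concatenating.

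So being $1$-Lipschitz is not the right criterion. You need the exact $\pm e_i$ step, together with the symmetry $(a,b)\in F_i\iff(b,a)\in F_i$ to handle the descending steps.
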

\begin{proof}
    Reduce the resulting pattern $Q$ (from the above process applied to $P$), which is of size $[1,2n-1]^2$, to size $[1,2n-2]^2$ (that is we remove the top-most row and right-most column). We obtain a periodic pattern $Q'$, since the left-most column (resp. the bottom-most row) of $Q$ coincides with the right-most column (resp. the top-most row) of $Q$, by the nature of the flips. This periodic pattern $Q'$ can be naturally extended to the whole plane by putting its translates side by side.
    As a corollary, the combinatorial entropy and the topological entropy are identical for symmetric nearest-neighbor subshifts, since every locally admissible pattern of $[1,n]^2$ for $n\ge 2$ is also a globally admissible one.
    
    On the other hand, we can also reduce the resulting pattern $Q$ of $[1,2n-1]^2$ to $[1,n+1]^2$ (note that $2n-1\ge n+1$ for $n\ge 2$). In other words, we have extended $P$ to $[1,n+1]^2$. Therefore, $C_n\le C_{n+1}$ for every $n\ge 2$.

    The same argument applies in higher dimensions by following the analogous process in Section \ref{sec:higher}.
\end{proof}

Now suppose that we take $4$ patterns $P_1,P_2,P_3,P_4$ with the same state (the patterns themselves are not necessarily different). We apply the same process as above, with $P_1,H(P_2),V(P_3),H(V(P_4))$ in place of $P,H(P),V(P),H(V(P))$ instead. The resulting pattern is still admissible due to the same argument as previously:
\begin{center}
    {
\ttfamily
\begin{tabular}{|c|c|c|}
\hline
$H(P_2)$ & \begin{tabular}{@{}c@{}} i\\h\\g\\f \end{tabular} & $H(V(P_4))$ \\
\hline
a b c d & e & d c b a \\
\hline
$P_1$ & \begin{tabular}{@{}c@{}} f\\g\\h\\i \end{tabular} & $V(P_3)$ \\
\hline
\end{tabular}\;.
}
\end{center}

As the total number of combinations of $P_1,P_2,P_3,P_4$ is $\left(C_n^{(s)}\right)^4$, and each combination yields a distinct admissible pattern on $[1,2n-1]^2$, we have
\[
    C_{2n-1} \ge \sum_{s\in S} \left(C_n^{(s)}\right)^4,
\]
where $S$ denotes the set of states.

For some convenience in a recurrence later, we use the inequality for $n+1$ instead of $n$ and obtain
\[
    C_{2n+1} \ge \sum_{s\in S} \left(C_{n+1}^{(s)}\right)^4 
    \ge \frac{1}{|S|}\left(\sum_{s\in S} \left(C_{n+1}^{(s)}\right)^2\right)^2
    \ge \frac{1}{|S|^3}\left(\sum_{s\in S} C_{n+1}^{(s)}\right)^4
    \ge \frac{(C_{n+1})^4}{|\Sigma|^{3(2n+1)}},
\]
where both the second inequality and the third inequality are due to the Cauchy--Schwarz inequality. Note that $|S|\le |\Sigma|^{2n+1}$ since a state on $[1,n+1]^2$ is determined by the values on $2n+1$ points of $[1,n+1]^2\setminus [1,n]^2$.

Dividing by $|\Sigma|^{6n+1}$ gives
\[
    \frac{C_{2n+1}}{|\Sigma|^{6n+1}} \ge \frac{(C_{n+1})^4}{|\Sigma|^{12n+4}}.
\]
Taking the $(4n^2)$-th root, we have
\begin{equation} \label{eq:increasing-plane}
    \left(\frac{C_{2n+1}}{|\Sigma|^{3\cdot(2n)+1}}\right)^{\frac{1}{(2n)^2}} \ge \left(\frac{C_{n+1}}{|\Sigma|^{3n+1}}\right)^\frac{1}{n^2}.
\end{equation}

Let
\[
    u_n = \left(\frac{C_{n+1}}{|\Sigma|^{3n+1}}\right)^\frac{1}{n^2}.
\]
For every $n$, it follows from \eqref{eq:increasing-plane} that the sequence $u_n, u_{2n}, u_{4n}, u_{8n}, \dots$ is increasing. Meanwhile, the sequence $u_1,u_2,u_3,\dots$ converges to $e^h$. Therefore, for every $n$,
\[
    e^h\ge \left(\frac{C_{n+1}}{|\Sigma|^{3n+1}}\right)^\frac{1}{n^2},
\]
which concludes Theorem \ref{thm:plane}.

\begin{remark} \label{rem:concat}
    Our ``merging'' approach is somewhat similar to the usual concatenation approach, which is usually used to prove the upper bound in Theorem \ref{thm:plane}. In particular, concatenating the four previously mentioned patterns $P_1,P_2,P_3,P_4$, we obtain
\begin{center}
    {
\ttfamily\upshape
\begin{tabular}{|c|c|c|c|c|}
\hline
a b c d & e & a b c d & e \\
\hline
$P_2$ & \begin{tabular}{@{}c@{}} f\\g\\h\\i \end{tabular} & $P_4$ & \begin{tabular}{@{}c@{}} f\\g\\h\\i \end{tabular} \\
\hline
a b c d & e & a b c d & e \\
\hline
$P_1$ & \begin{tabular}{@{}c@{}} f\\g\\h\\i \end{tabular} & $P_3$ & \begin{tabular}{@{}c@{}} f\\g\\h\\i \end{tabular} \\
\hline
\end{tabular}\;.
}
\end{center}
For comparison, instead of just putting the patterns side by side for an upper bound, we flip the patterns and merge the boundaries appropriately to ensure the admissibility of the resulting pattern for a lower bound. On the other hand, we have not covered the case of merging patterns of different states.
\end{remark}

\section{In higher dimensions}
\label{sec:higher}
In this section, we prove Theorem \ref{thm:main}. The adaptation of the techniques in Section \ref{sec:plane} for higher dimensions is straightforward with $d$ flips for $\mathbb Z^d$, instead of only the vertical flip and the horizontal flip as in $\mathbb Z^2$. We only need to formalize the notation and the steps as follows. However, the analysis may be slightly more involved.

We denote by $f_1,\dots,f_d$ the $d$ flips corresponding to the $d$ dimensions. In particular, for a cubic pattern $P\in\Sigma^{[1,n]^d}$, the function $f_k$ swaps the parts of the pattern on the hyperplanes with the $k$-th coordinates equal to $i$ and $n+1-i$. In other words, if $Q=f_k(P)$ then for every point $x=(x_1,\dots,x_d)\in [1,n]^d$, we have $Q(x) = P(y)$ for
\[
    y=(x_1,\dots,x_{k-1},n+1-x_k,x_{k+1},\dots,x_d).
\]
The notation $P(x)$ here denotes the value of the point $x$ in the pattern $P$.

By a state of a pattern $P\in\Sigma^{[1,n]^d}$ we mean the values of the points $x=(x_1,\dots,x_d)$ where $x_i=n$ for at least one $i$. In other words, those points are in $[1,n]^d\setminus [1,n-1]^d$. Let $S$ denote the set of all possible states.

Now take any $2^d$ patterns $P_0,\dots,P_{2^d-1}\in \Sigma^{[1,n]^d}$ with the same state. For each number $t=0,1,\dots,2^d-1$, we consider the binary representation, which is a unique sequence of binary digits $b_1,\dots,b_d$. In other words, each $t$ determines a unique subset of $\{f_1,\dots,f_d\}$, where $f_k$ belongs to the subset if $b_k=1$. Let $\tau_t$ denote the composition of all the flips in the subset for $t$. Note that the order of flips in the composition does not matter. Also, when $t=0$, there is no flip and $\tau_t$ is the identity.

For each $t$, we consider the result of the transform $\tau_t(P_t)$ and translate it by the vector $\sum_i (n-1) b_ie_i$, where each $e_i$ is the unit vector with the $i$-th coordinate being $1$. As the subshift is symmetric, each $\tau_t(P_t)$ is also an admissible pattern, since each flip keeps the admissibility of the pattern. The union of all these
\[
    \bigcup_{t=0}^{2^d-1} \left(\tau_t(P_t) + \sum_{i=1}^d (n-1)b_ie_i\right)
\]
is a pattern for $[1,2n-1]^d$. Its $2^d$ components are disjoint except at the boundary where they agree on the values since all the $P_i$ share the same state. 
Every nearest-neighbor pair of sites in \([1,2n-1]^d\) is contained in at
least one of the components. Since each component is
admissible, the union is also admissible.

Let $C_n$ (resp. $C_n^{(s)}$ for a state $s\in S$) denote the number of patterns in $\Sigma^{[1,n]^d}$ (resp. with the state $s$). Since the above manipulation of $2^d$ patterns of $[1,n]^d$ yields a unique pattern of $[1,2n-1]^d$, we have
\begin{equation} \label{eq:key}
    C_{2n-1}\ge \sum_{s\in S} \left(C_n^{(s)}\right)^{2^d}.
\end{equation}
We do not have the equality in general, since a pattern in $[1,2n-1]^d$ may be the composition of patterns of different states. 

We can eliminate the sum with the states in \eqref{eq:key} as follows. For some convenience later, we write it for $n+1$ instead of $n$.
\begin{lemma} \label{lem:key}
    For every $n$,
    \[
        C_{2n+1} \ge \frac{(C_{n+1})^{2^d}}{|\Sigma|^{(2^d-1)((n+1)^d-n^d)}}.
    \]
\end{lemma}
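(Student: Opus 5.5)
We start from \eqref{eq:key} with $n+1$ in place of $n$:
\[
    C_{2n+1}\ge \sum_{s\in S} \left(C_{n+1}^{(s)}\right)^{2^d},
\]
where $S$ is now the set of states of patterns on $[1,n+1]^d$. The goal is to remove the dependence on individual states by a power-mean inequality. The relevant count of states is $|S|\le |\Sigma|^{(n+1)^d-n^d}$, since a state on $[1,n+1]^d$ is determined by the values on $[1,n+1]^d\setminus[1,n]^d$, which has $(n+1)^d-n^d$ points.

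Rather than iterating Cauchy--Schwarz $d$ times as in the planar case, we apply H\"older's inequality (equivalently, convexity of $x\mapsto x^{2^d}$, i.e.\ the power-mean inequality) in one step:
\[
    \sum_{s\in S} x_s^{p} \ge |S|^{1-p}\Bigl(\sum_{s\in S} x_s\Bigr)^{p}, \qquad p=2^d,
\]
with $x_s=C_{n+1}^{(s)}\ge 0$. This follows from Jensen's inequality applied to the uniform average over $S$.

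Since every admissible pattern on $[1,n+1]^d$ has exactly one state, $\sum_{s} C_{n+1}^{(s)} = C_{n+1}$. Combining, we get
\[
    C_{2n+1}\ge \frac{(C_{n+1})^{2^d}}{|S|^{2^d-1}}\ge \frac{(C_{n+1})^{2^d}}{|\Sigma|^{(2^d-1)((n+1)^d-n^d)}},
\]
which is the claim. If one prefers to stay close to Section \ref{sec:plane}, the same bound comes from applying Cauchy--Schwarz $d$ times, $\sum x^{2^{j}}\ge |S|^{-1}(\sum x^{2^{j-1}})^2$. The exponents of $|S|$ then telescope to $1+2+\dots+2^{d-1}=2^d-1$.

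There is no real obstacle; the only point needing care is the count of boundary points in the state, $(n+1)^d-n^d$, and the fact that states partition the patterns. We restrict $S$ to states that actually occur, or allow zero summands, which is harmless in either inequality. If the subshift has $C_{n+1}=0$, the bound is trivial.
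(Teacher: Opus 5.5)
Your proposal is correct and is essentially the paper's own proof: apply \eqref{eq:key} at $n+1$, use the power mean inequality with $p=2^d$, use that the states partition the patterns so that $\sum_s C_{n+1}^{(s)}=C_{n+1}$, and bound $|S|\le|\Sigma|^{(n+1)^d-n^d}$. Your alternative of iterating Cauchy--Schwarz $d$ times, with the exponents telescoping to $2^d-1$, is exactly the variant the paper mentions in its footnote.
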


\begin{proof}
We apply Inequality \eqref{eq:key} for $n+1$ and obtain
\[
    C_{2n+1}
    \ge
    \sum_{s\in S} \left(C_{n+1}^{(s)}\right)^{2^d}.
\]
By the power mean inequality,\footnote{We can apply the Cauchy-Schwarz inequality multiple times as in Section \ref{sec:plane} to obtain the same result, but using the power mean inequality is more concise here.} for any nonnegative numbers $(a_s)_{s\in S}$ and
any $p\ge 1$,
\[
    \sum_{s\in S} a_s^p
    \ge
    |S|^{1-p}\left(\sum_{s\in S}a_s\right)^p.
\]
Applying this with $a_s=C_{n+1}^{(s)}$ and $p=2^d$, we get
\[
    \sum_{s\in S} \left(C_{n+1}^{(s)}\right)^{2^d}
    \ge
    |S|^{1-2^d}
    \left(\sum_{s\in S} C_{n+1}^{(s)}\right)^{2^d}
    =
    |S|^{1-2^d}(C_{n+1})^{2^d}.
\]
Hence
\[
    C_{2n+1}
    \ge
    \frac{(C_{n+1})^{2^d}}{|S|^{2^d-1}} \ge \frac{(C_{n+1})^{2^d}}
    {|\Sigma|^{(2^d-1)((n+1)^d-n^d)}},
\]
where the bound $|S|\le |\Sigma|^{(n+1)^d-n^d}$ in the second step is due to the fact that a state on $[1,n+1]^d$ is determined by the values on $[1,n+1]^d\setminus [1,n]^d$.
\end{proof}

Denote
\[
    q_d(n)=(2^d-1)\sum_{k=0}^{d-1} \frac{\binom{d}{k}}{2^d-2^k}\,n^k.
\]
The following relation can be verified in a simple but technical way, as in Appendix \ref{app:verification}.

\begin{lemma} \label{lem:relation}
For every $d,n$, we have
\[
    q_d(2n) + (2^d-1)((n+1)^d-n^d) = 2^d q_d(n).
\]
\end{lemma}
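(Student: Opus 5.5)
The identity is linear in the vector of coefficients of $q_d$, so I will verify it by comparing coefficients of $n^k$ for $k=0,\dots,d-1$ on both sides. Write $q_d(n)=(2^d-1)\sum_{k=0}^{d-1} c_k n^k$ with $c_k=\binom{d}{k}/(2^d-2^k)$. The common factor $2^d-1$ divides out, so it suffices to prove
\[
    \sum_{k=0}^{d-1} c_k (2n)^k + \bigl((n+1)^d-n^d\bigr) = 2^d\sum_{k=0}^{d-1} c_k n^k .
\]

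Next I expand the middle term by the binomial theorem: $(n+1)^d-n^d=\sum_{k=0}^{d-1}\binom{d}{k}n^k$. The top-degree term $n^d$ cancels, so both sides are polynomials of degree at most $d-1$. Comparing the coefficient of $n^k$ then reduces the claim to
\[
    2^k c_k + \binom{d}{k} = 2^d c_k, \qquad\text{that is,}\qquad c_k(2^d-2^k)=\binom{d}{k},
\]
for each $0\le k\le d-1$. This is exactly the definition of $c_k$, and it is legitimate because $2^d-2^k\neq 0$ for $k<d$.

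There is no genuine obstacle here. The only point needing care is that the index range $k\le d-1$ excludes the degenerate denominator $2^d-2^d$, and that this range is precisely what the cancellation of $n^d$ in $(n+1)^d-n^d$ leaves. In fact the definition of $q_d$ is reverse-engineered from this coefficient matching: it is the unique polynomial of degree less than $d$ satisfying the functional equation. One could also note that the identity holds for all real $n$, not only integers, since it is a polynomial identity.
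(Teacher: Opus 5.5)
Your proposal is correct and is essentially the paper's own verification in the appendix: both expand $(n+1)^d-n^d=\sum_{k=0}^{d-1}\binom{d}{k}n^k$ and match coefficients of $n^k$ via $\frac{2^k}{2^d-2^k}+1=\frac{2^d}{2^d-2^k}$. Your added observations, that $k\le d-1$ keeps the denominator nonzero and that $q_d$ is the unique polynomial of degree less than $d$ satisfying the relation, are accurate.
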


It follows from Lemma \ref{lem:key} and Lemma \ref{lem:relation} that
\[
    \frac{C_{2n+1}}{|\Sigma|^{q_d(2n)}} \ge \frac{(C_{n+1})^{2^d}}{|\Sigma|^{q_d(2n) +(2^d-1)((n+1)^d-n^d)}} = \frac{(C_{n+1})^{2^d}}{|\Sigma|^{2^d q_d(n)}}.
\]
Taking the root of degree $(2n)^d$, one obtains
\begin{equation} \label{eq:increasing}
    \left(\frac{C_{2n+1}}{|\Sigma|^{q_d(2n)}}\right)^{\frac{1}{(2n)^d}} \ge \left(\frac{C_{n+1}}{|\Sigma|^{q_d(n)}}\right)^{\frac{1}{n^d}}.
\end{equation}

Let
\[
    v_n = \left(\frac{C_{n+1}}{|\Sigma|^{q_d(n)}}\right)^{\frac{1}{n^d}}.
\]
For every $n$, it follows from \eqref{eq:increasing} that the sequence $v_n, v_{2n}, v_{4n}, v_{8n}, \dots$ is increasing. Meanwhile, the sequence $v_1,v_2,v_3,\dots$ converges to $e^h$. Therefore, for every $n$,
\[
    e^h\ge \left(\frac{C_{n+1}}{|\Sigma|^{q_d(n)}}\right)^{\frac{1}{n^d}},
\]
which concludes Theorem \ref{thm:main}.

\appendix
\section{Verification of Lemma \ref{lem:relation}}
\label{app:verification}
Starting from
\[
q_d(n)=(2^d-1)\sum_{k=0}^{d-1}\frac{\binom{d}{k}}{2^d-2^k} n^k,
\]
we compute
\[
q_d(2n)
=(2^d-1)\sum_{k=0}^{d-1}\frac{\binom{d}{k}}{2^d-2^k}(2n)^k
=(2^d-1)\sum_{k=0}^{d-1}\frac{2^k\binom{d}{k}}{2^d-2^k}n^k.
\]

Also,
\[
(n+1)^d-n^d
=\sum_{k=0}^{d-1}\binom{d}{k}n^k.
\]
Hence
\[
q_d(2n)+(2^d-1)\bigl((n+1)^d-n^d\bigr)
\]
equals
\[
(2^d-1)\sum_{k=0}^{d-1}
\left(
\frac{2^k\binom{d}{k}}{2^d-2^k}
+\binom{d}{k}
\right)n^k.
\]
Combining the terms inside the parentheses,
\[
\frac{2^k}{2^d-2^k}+1
=
\frac{2^k+2^d-2^k}{2^d-2^k}
=
\frac{2^d}{2^d-2^k}.
\]
Therefore
\[
q_d(2n)+(2^d-1)\bigl((n+1)^d-n^d\bigr)
=
(2^d-1)\sum_{k=0}^{d-1}
\frac{2^d\binom{d}{k}}{2^d-2^k}n^k,
\]
which is exactly
\[
2^d q_d(n).
\]

\bibliographystyle{unsrt}
\bibliography{symsft}

\end{document}